\newtheorem{thm}{Theorem}[subsection]
\newtheorem{lem}[thm]{Lemma}
\newtheorem{mydef}[thm]{Definition}
\theoremstyle{remark}
\newtheorem{rem}[thm]{Remark}
\newtheorem{notn}[thm]{Notation}
\newtheorem{exm}[thm]{Example}
\title{Parking functions on directed stars and orientation reversal with an extension to general directed trees}
\author{Roger Tian\footnote{\href{mailto:rgtian@math.ucdavis.edu}{rgtian@math.ucdavis.edu} or \href{mailto:htrland@gmail.com}{htrland@gmail.com}}}
\date{\today}
\begin{document}
\maketitle

\begin{abstract}
Parking functions, classically defined in terms of cars with preferred parking spots on a directed path attempting to park there, arise in many combinatorial situations and have seen various generalizations. In particular, parking functions have been defined for general digraphs, which yields many more enumeration problems. For example, in a directed tree whose edges are orientated away from the root, it is unknown in general how the number of parking functions on it changes once the orientation is reversed, even in the case when the tree is a star. We show that this orientation reversal results in more parking functions on the directed star in most cases, after which we extend these methods to show that this also results in more parking functions on the general directed tree if, in some sense, the number of vertices greatly exceeds the number of cars.
\end{abstract}

\section{Introduction}
Introduced by Konheim and Weiss \cite{KonWei} in their work on the linear probing solution to collisions on hash tables, parking functions can be described as follows: Consider a sequence of $m$ cars attempting to park, one after another, randomly along a one-way street with $n$ parking spots. Each car has a preferred parking spot and will park there if the spot is unoccupied. If it is occupied, the car attempts to park at the next parking spot, and this process continues until the car manages to park, or it terminates due to lack of available parking spots. The street can be modeled by a directed path with $n$ vertices, and this sequence $s \in [n]^m$ of car preferences is called a \textit{classical $(m,n)$-parking function} if all $m$ cars manage to park through this process. For $m \leq n$, there are $(n-m+1)(n+1)^{m-1}$ classical $(m,n)$-parking functions \cite{CaJoSch}. 

Classical parking functions are prominent combinatorial objects in the study of topics such as noncrossing partitions, tree enumeration, and acyclic functions \cite{GesSeo} \cite{Stan} \cite{Yan15}. There have been many generalizations of parking functions, including rational parking functions \cite{ArmLeoWar}, Naples parking functions \cite{Baum}, and parking functions on digraphs \cite{KingYan}. In particular, King and Yan \cite{KingYan} investigated what happens to the number of parking functions on a directed tree when the orientation is reversed, and we will continue this line of study here.

For a digraph $D$ let $P(D,m)$ denote the number of parking functions with $m$ cars on the digraph $D$, and let $n^{\underline{m}} := {n \choose m}m!$. In the case where the digraph is a sink tree $T$, i.e. a rooted tree with vertex set $[n]$ and edges oriented toward the root, the following inequalities have been established \cite{LackPan}: \[n^{\underline{m}}+{m \choose 2}(n-1)^{\underline{m-1}} \leq P(T,m) \leq (n-m+1)(n+1)^{m-1}.\]

Let $\tilde{T}$ be the source tree obtained by reversing the orientation of $T$. Then the following inequalities hold \cite{KingYan}: \[\sum_{i=0}^{m}{{m \choose i}(n-1)^{\underline{m-i}}} \leq P(\tilde{T},m) \leq (n-m+1)(n+1)^{m-1}\] and furthermore \[P(T,n) \leq P(\tilde{T},n)\] with equality if and only if $T$ is a path.

It is not known in general which of $P(T,m)$ and $P(\tilde{T},m)$ is larger for $m < n$, even in the case where $T$ is a star whose center is the root, for which $P(T,m) = n^{\underline{m}}+{m \choose 2}(n-1)^{\underline{m-1}}$ and $P(\tilde{T},m) = \sum_{i=0}^{m}{{m \choose i}(n-1)^{\underline{m-i}}}$.

In Section \ref{dirstar}, we show that the sink star has more parking functions than its corresponding source star in most cases. These methods will then be extended in Section \ref{dirtree} to show that $P(\tilde{T},m) < P(T,m)$ when $m$ is ``much smaller'' than $n$, for any sink tree $T$.

\section{Acknowledgments}
The author would like to thank Westin King and Catherine Yan for helpful conversations via email.

\section{Preliminaries}
We start with the definition of parking functions on digraphs given in \cite{KingYan}.

\begin{mydef}[Parking Process]
Let $m, n$ be integers such that $0 \leq m \leq n$. Let $D$ be a digraph on vertex set $[n]$ and let $s \in [n]^m$. One after another $m$ cars try to park in $D$ via the following process:
\begin{enumerate}
\item Car $i$ starts at vertex $s_i$.
\item \label{proctwo} The car will park at the current vertex if it is unoccupied. If the current vertex is occupied, then the car chooses a vertex in its neighborhood and drives there.
\item The car repeats step \ref{proctwo}) until it either parks, and the next car enters, or is unable to find an available vertex to park at, and the process terminates.
\end{enumerate}
\end{mydef}

\begin{mydef}
Let $D$ be a digraph on vertex set $[n]$. For a sequence $s \in [n]^m$, we say that $s$ is a \textbf{parking function on $D$} if it is possible for all $m$ cars to park following the aforementioned parking process. If $s$ is a parking function on $D$, then the pair $(D,s)$ will be called an \textbf{$(n,m)$-parking function}.
\end{mydef}

\begin{exm} Suppose $D$ is the following digraph.

\begin{tikzpicture}
\node[shape=circle,draw=black] (1) at (0,0) {1};
\node[shape=circle,draw=black] (2) at (0,1) {2};
\node[shape=circle,draw=black] (3) at (0,-1) {3};
\node[shape=circle,draw=black] (4) at (1,0) {4};
\node[shape=circle,draw=black] (5) at (-1,0) {5};
\node[shape=circle,draw=black] (6) at (0,2) {6};
\node[shape=circle,draw=black] (7) at (-1,2) {7};
\node[shape=circle,draw=black] (8) at (-2,2) {8};
\node[shape=circle,draw=black] (9) at (1,2) {9};
\node[shape=circle,draw=black] (10) at (2,2) {10};
\node[shape=circle,draw=black] (11) at (3,2) {11};
\node[shape=circle,draw=black] (12) at (-2,0) {12};
\node[shape=circle,draw=black] (13) at (2,1) {13};

\path [->] (1) edge (2);
\path [->] (1) edge (3);
\path [->] (1) edge (4);
\path [->] (1) edge (5);
\path [->] (2) edge (6);
\path [->] (6) edge (7);
\path [->] (7) edge (8);
\path [->] (9) edge (10);
\path [->] (10) edge (11);
\path [->] (5) edge (12);
\path [->] (6) edge (9);
\path [->] (10) edge (13);
\end{tikzpicture}

For $n = 13$ and $m=9$, the sequence $s = (6,6,6,10,10,1,1,1,1)$ is a $(13,9)$-parking function.
\end{exm}

\noindent We now go over some notations that will be used repeatedly in the following sections.

\begin{notn}
\label{numset}
For vertices $i, j \in [n]$, we write $i \preceq_D j$ if there is a directed path from $i$ to $j$ in $D$. In the vein of \cite{KingYan}, we continue the practice of using $P(D,m)$ to denote the number of parking functions with $m$ cars on the digraph $D$. We will use $\breve{P}(D,m)$ to denote the set of parking functions with $m$ cars on the digraph $D$. More generally, if ``$P\ldots$'' denotes the number of parking functions satisfying some conditions, we will use ``$\breve{P}\ldots$'' to denote the set of parking functions satisfying these conditions, and vice versa.
\end{notn}

\section{Directed Stars \label{dirstar}}
Fix $3 < m < n \in \mathbb{N}$. Let $S$ denote a sink star with $n$ vertices and root $z$, and let $\tilde{S}$ denote the corresponding source star. We wish to find out which of \[P(\tilde{S},m) = \sum_{i=0}^{m}{{m \choose i}(n-1)^{\underline{m-i}}}\;,\] and \[P(S,m) = n^{\underline{m}}+{m \choose 2}(n-1)^{\underline{m-1}}\] is larger.

In comparing $P(S,m)$ and $P(\tilde{S},m)$, there are three cases for a parking function to consider, depending on whether the parking lot is $\tilde{S}$ or $S$:
\begin{enumerate}
\item No car parks at $z$. This applies to both $S$ and $\tilde{S}$.
\item One car prefers $z$. This applies to both $S$ and $\tilde{S}$.
\item  
\begin{enumerate}
\item Two cars prefer one leaf. This applies to $S$.
\item At least two cars prefer $z$. This applies to $\tilde{S}$.
\end{enumerate}
\end{enumerate}

The following lemma shows that the number of parking functions is the same in Cases 1 and 2, where no two cars prefer the same vertex.

\begin{lem}
\label{cancelcom}
We have \[\sum_{i=0}^{1}{{m \choose i}(n-1)^{\underline{m-i}}} = n^{\underline{m}}\;.\]
\end{lem}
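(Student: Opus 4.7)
The plan is to verify this identity by direct algebraic manipulation of the two summands on the left-hand side, since only $i=0$ and $i=1$ contribute. First I would expand $(n-1)^{\underline{m}} = (n-1)(n-2)\cdots(n-m)$ and $(n-1)^{\underline{m-1}} = (n-1)(n-2)\cdots(n-m+1)$, observing that the latter is a prefix of the former. The shared factor $(n-1)^{\underline{m-1}}$ can then be pulled out of both terms.

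After factoring, the expression becomes
\[
(n-1)^{\underline{m-1}}\bigl[(n-m) + m\bigr] \;=\; n\,(n-1)^{\underline{m-1}} \;=\; n^{\underline{m}},
\]
which is exactly the right-hand side. So the whole proof is a one-line calculation.

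Alternatively, one can view the identity as the falling-factorial form of Pascal's rule: multiplying $\binom{n}{m} = \binom{n-1}{m} + \binom{n-1}{m-1}$ through by $m!$ gives $n^{\underline{m}} = (n-1)^{\underline{m}} + m(n-1)^{\underline{m-1}}$, which is precisely the claim. Either presentation suffices; I would go with the direct factoring since it requires no appeal to outside identities. There is no real obstacle here — the lemma is a purely computational warm-up that isolates the algebraic fact underlying Cases 1 and 2 of the trichotomy, so that later work can focus on Case 3 where the two counts genuinely differ.
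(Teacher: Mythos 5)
Your proof is correct, but it takes a different route from the paper's. You verify the identity algebraically: writing $(n-1)^{\underline{m}} = (n-1)^{\underline{m-1}}(n-m)$ and factoring gives $(n-1)^{\underline{m-1}}\bigl[(n-m)+m\bigr] = n^{\underline{m}}$, which is airtight (and your Pascal's-rule reformulation is the same fact in binomial clothing). The paper instead argues combinatorially: both sides count the injective sequences $s \in [n]^m$, which are automatically parking functions on both $S$ and $\tilde{S}$ since every car parks at its preferred vertex; the left side is this count split according to whether some car prefers the root $z$ ($i=1$) or not ($i=0$), and the correspondence is simply the identity map on sequences. The paper's bijection buys conceptual continuity with the rest of Section 4 --- it is the degenerate case of the matching between $\breve{P}_{\geq 2}(\tilde{S},m)$ and $\breve{P}_{0\&1}(S,m)$ carried out later, and it explains \emph{why} Cases 1 and 2 cancel rather than merely \emph{that} they do. Your computation is more self-contained: the paper's one-line proof tacitly assumes the reader has already accepted the closed forms for the two counts, whereas yours needs no combinatorial input at all. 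Either is acceptable here.
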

\begin{proof}
Each parking function $(\tilde{S},s)$ on $\tilde{S}$ with no two cars preferring the same vertex corresponds to a unique parking function $(S,s)$ on $S$ with no two cars preferring the same vertex, and vice versa.
\end{proof}

Let $\breve{P}_{\geq2}(\tilde{S},m)$ denote the set of elements of $\breve{P}(\tilde{S},m)$ with at least two cars preferring $z$. Let $\breve{P}_{0\&1}(S,m)$ denote the set of elements of $\breve{P}(S,m)$ with two cars preferring the same leaf. To compare \[\sum_{i=0}^{m}{{m \choose i}(n-1)^{\underline{m-i}}}\] and \[n^{\underline{m}}+{m \choose 2}(n-1)^{\underline{m-1}}\;,\] it suffices to compare \[P_{\geq2}(\tilde{S},m) = \sum_{i=2}^{m}{{m \choose i}(n-1)^{\underline{m-i}}}\] and \[P_{0\&1}(S,m) = {m \choose 2}(n-1)^{\underline{m-1}}\] by Lemma \ref{cancelcom}; in other words, any difference must come from Case 3.

Due to the apparent difficulty in comparing $P_{\geq2}(\tilde{S},m)$ and $P_{0\&1}(S,m)$ directly from their formulae, we will partition $\breve{P}_{\geq2}(\tilde{S},m)$ and $\breve{P}_{0\&1}(S,m)$ into corresponding subsets that are much easier to compare; this partitioning in effect controls the ``common'' aspects of $P_{\geq2}(\tilde{S},m)$ and $P_{0\&1}(S,m)$, and allows us to focus on where they are different. Let $i < j \in [m]$. We first define the sets $\breve{P}_{\geq2}(\tilde{S},m)^{i,j}$ partitioning $\breve{P}_{\geq2}(\tilde{S},m)$ and the sets $\breve{P}_{0\&1}(S,m)^{i,j}$ partitioning $\breve{P}_{0\&1}(S,m)$ as follows.

Let $\breve{P}_{0\&1}(S,m)^{i,j}$ denote the set of parking functions in $\breve{P}_{0\&1}(S,m)$ with cars $i$, $j$ preferring the same leaf vertex. Let $U_{i,j} := [j]-\{i,j\}$. Let $\breve{P}_{\geq2}(\tilde{S},m)^{i,j}$ denote the set of parking functions in $\breve{P}_{\geq2}(\tilde{S},m)$ with $(i,j)$ being the smallest pair of cars preferring the root, i.e. there are no cars in $U_{i,j}$ preferring the root. We further partition $\breve{P}_{0\&1}(S,m)^{i,j}$ and $\breve{P}_{\geq2}(\tilde{S},m)^{i,j}$ as follows.

For any parking function in $\breve{P}_{\geq2}(\tilde{S},m)^{i,j}$, the cars in $U_{i,j}$ prefer leaf vertices, so we can first arrange these $j-2$ cars among the $n-1$ leaves. For each injection $f: U_{i,j} \rightarrow \tilde{S}-\{z\}$, let $\breve{P}_{\geq2}(\tilde{S},m)^{i,j}(f)$ denote the elements of $\breve{P}_{\geq2}(\tilde{S},m)^{i,j}$ with cars in $U_{i,j}$ arranged via $f$, and let $\breve{P}_{0\&1}(S,m)^{i,j}(f)$ denote the elements of $\breve{P}_{0\&1}(S,m)^{i,j}$ with cars in $U_{i,j}$ arranged via $f$. Since $f$ is itself a ``restricted'' parking function, we will express $f$ as a sequence when the context is clear. We can thus partition $\breve{P}_{\geq2}(\tilde{S},m)^{i,j}$ into the sets $\breve{P}_{\geq2}(\tilde{S},m)^{i,j}(f)$ and $\breve{P}_{0\&1}(S,m)^{i,j}$ into the sets $\breve{P}_{0\&1}(S,m)^{i,j}(f)$, with $f$ ranging over the injections $U_{i,j} \rightarrow \tilde{S}-\{z\}$.

\begin{exm} Suppose $\tilde{S}$ is the following source star.

\begin{tikzpicture}
\node[shape=circle,draw=black] (1) at (0,0) {1};
\node[shape=circle,draw=black] (2) at (0,1) {2};
\node[shape=circle,draw=black] (3) at (0,-1) {3};
\node[shape=circle,draw=black] (4) at (1,0) {4};
\node[shape=circle,draw=black] (5) at (-1,0) {5};
\node[shape=circle,draw=black] (6) at (-1,1) {6};
\node[shape=circle,draw=black] (7) at (1,1) {7};
\node[shape=circle,draw=black] (8) at (1,-1) {8};
\node[shape=circle,draw=black] (9) at (-1,-1) {9};

\path [->] (1) edge (2);
\path [->] (1) edge (3);
\path [->] (1) edge (4);
\path [->] (1) edge (5);
\path [->] (1) edge (6);
\path [->] (1) edge (7);
\path [->] (1) edge (8);
\path [->] (1) edge (9);
\end{tikzpicture}

Let $m = 5$, $(i,j) = (1,3)$, and $f = (6)$. On $\tilde{S}$, the cars $(i,j)$ must prefer vertex $1$. On $S$, the cars $(i,j)$ can prefer vertices $2, 7, 4, 8, 3, 9, 5$. An element of $\breve{P}_{\geq2}(\tilde{S},m)^{i,j}(f)$ is $s = (1,6,1,7,1)$. An element of $\breve{P}_{0\&1}(S,m)^{i,j}(f)$ is $s = (8,6,8,4,3)$.
\end{exm}

The following lemma will help us compare the cardinalities of the sets $\breve{P}_{\geq2}(\tilde{S},m)^{i,j}(f)$ and $\breve{P}_{0\&1}(S,m)^{i,j}(f)$.
\begin{lem}
\label{precise}
For integers $a, b$ with 
$0 \leq a \leq b-1$ and $a+1 \leq \frac{b+1}{r}$, where $r \geq 2$ is a positive integer, we have \[r\sum_{l=0}^{a}{{a \choose l}(b+1)^{\underline{l}}} \leq (b+1)b^{\underline{a}},\] with equality possible only when $a = 0$.
\end{lem}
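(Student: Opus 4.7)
My plan is to reduce the assertion to a cleaner ``$r$-free'' inequality. I claim that
\[L(a,b):=\sum_{l=0}^{a}\binom{a}{l}(b+1)^{\underline{l}}\ \leq\ (a+1)b^{\underline{a}}\]
whenever $b\geq 2a+1$, with strict inequality for $a\geq 1$. Granting this, since the hypothesis gives $r\leq (b+1)/(a+1)$,
\[rL(a,b)\ \leq\ \frac{b+1}{a+1}L(a,b)\ \leq\ (b+1)b^{\underline{a}},\]
and overall equality can propagate only when $a=0$ (and then also $r=b+1$).

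To prove the reduced inequality, I would first rewrite $L(a,b)$ in a dual form via its exponential generating function. Interchanging summations gives $\sum_{a\geq 0}\frac{L(a,b)}{a!}x^a=e^x(1+x)^{b+1}$, and factoring as $(1+x)e^x\cdot(1+x)^b$ together with the identity $(1+x)e^x=\sum_k\frac{k+1}{k!}x^k$ yields
\[L(a,b)=\sum_{k=0}^{a}(k+1)\binom{a}{k}b^{\underline{a-k}}.\]
Pulling out the $k=0$ contribution, which equals $b^{\underline{a}}$, the inequality becomes
\[\sum_{k=1}^{a}\frac{(k+1)\binom{a}{k}}{(c+1)(c+2)\cdots(c+k)}\ \leq\ a,\qquad c:=b-a\geq a+1.\]

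Finally, I would bound each term using $\binom{a}{k}\leq a^k/k!$ and $(c+1)\cdots(c+k)\geq (c+1)^k$, obtaining a majorant of $(1+x)e^x-1$ with $x:=a/(c+1)\leq a/(a+2)$. Since $(1+x)e^x$ is increasing, the worst case is $x=a/(a+2)$, at which the required bound $(1+x)e^x\leq a+1$ reduces to $\ln((a+2)/2)\geq a/(a+2)$. Writing $u=a+2$, this is $h(u):=\ln(u/2)-1+2/u\geq 0$, which is immediate from $h(2)=0$ and $h'(u)=(u-2)/u^2\geq 0$, with equality only at $u=2$ (i.e., $a=0$).

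The main obstacle will be the tightness at the boundary $b=2a+1$. A naive induction on $a$ via $L(a,b)=L(a-1,b)+(b+1)L(a-1,b-1)$ already fails to close for $a\geq 3$ at this boundary, so the analytic route above (or a substantially stronger inductive invariant) seems necessary; note that the calculus bound $(1+x)e^x\leq a+1$ is sharp precisely at $a=0$, exactly matching the lemma's equality condition.
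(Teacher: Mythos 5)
Your proposal is correct, and while its outer skeleton coincides with the paper's --- both reduce the claim to $\sum_{l=0}^a \binom{a}{l}(b+1)^{\underline{l}} \leq (a+1)\,b^{\underline{a}}$ (strict for $a \geq 1$) and then multiply through by $r \leq \tfrac{b+1}{a+1}$ --- the proof of that reduced inequality, which is the real content, is genuinely different. The paper compares each term $C_l = \binom{a}{l}(b+1)^{\underline{l}}$ against $b^{\underline{a}}$ individually, which forces separate hand computations for $a=1,2$ and an ad hoc grouping $C_0+C_1+C_{a-1}+C_a$ versus $4b^{\underline{a}}$ for $a>2$. You instead use the factorization $e^x(1+x)^{b+1} = \bigl[(1+x)e^x\bigr](1+x)^{b}$ to rewrite the sum as $\sum_{k}(k+1)\binom{a}{k}b^{\underline{a-k}}$ (I verified this identity, e.g.\ $L(2,5)=43=20+20+3$), peel off the $k=0$ term, and majorize the remainder by $(1+x)e^x-1$ with $x = a/(b-a+1) \leq a/(a+2)$, reducing everything to $\ln\bigl((a+2)/2\bigr) \geq a/(a+2)$, which your monotonicity argument settles and which is sharp exactly at $a=0$, matching the lemma's equality condition. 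All the steps check out: the recurrence for $L$, the condition $b \geq 2a+1$ (hence $c = b-a \geq a+1$) coming from $r \geq 2$, the bounds $\binom{a}{k} \leq a^k/k!$ and $(c+1)\cdots(c+k) \geq (c+1)^k$, and the strictness for $a \geq 1$. Your route buys a uniform, casework-free argument together with a conceptual explanation of the equality case; the paper's route is more elementary (no generating functions or calculus) and its term-by-term style is the template the author then adapts for Lemma 4.5. One small presentational note: when you claim strict inequality for $a\geq 1$, it is worth saying explicitly that strictness comes from $h(u)>0$ for $u>2$ (the majorization steps alone give only $\leq$ at $k=1$).
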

\begin{proof}
The inequality is clear for $a = 0$. For $a = 1$, we have $r\sum_{l=0}^{1}{{1 \choose l}(b+1)^{\underline{l}}} = r(b+2)$ and $(b+1)b^{\underline{1}} = b(b+1)$, so the inequality holds as $b(b+1) \geq 2rb > r(b+2)$ since $b \geq 3$. For $a = 2$, we have $r\sum_{l=0}^{2}{{2 \choose l}(b+1)^{\underline{l}}} = r[1+2(b+1)+b(b+1)] = r(b^2+3b+3)$ and $(b+1)b^{\underline{2}} = (b+1)b(b-1) \geq 3rb(b-1)$, so the inequality holds as $3b(b-1) > b^2+3b+3$ since $b \geq 5$.

Now assume $a > 2$. We show that $$\sum_{l=0}^{a}{{a \choose l}(b+1)^{\underline{l}}} < (a+1)b^{\underline{a}},$$ by comparing $C_l = {a \choose l}(b+1)^{\underline{l}}$ with $b^{\underline{a}}$ for $l = 0, 1, \ldots a$. 

We have $C_{a-2} = \frac{a(a-1)}{2}(b+1)^{\underline{a-2}} = b^{\underline{a-3}}[a(a-1)\frac{b+1}{2}]$ and $b^{\underline{a}} = b^{\underline{a-3}}[(b-a+3)(b-a+2)(b-a+1)]$. Since $a \leq \frac{b+1}{r}-1 \leq \frac{b}{2}$, we have $[a(a-1)\frac{b+1}{2}] < [(b-a+3)(b-a+2)(b-a+1)]$, and so $C_{a-2} < b^{\underline{a}}$.

It is straightforward to check that $C_l < C_{a-2} < b^{\underline{a}}$ for all $l < a-2$. Hence it remains to compare $C_0+C_1+C_{a-1}+C_{a}$ with $b^{\underline{a}}+b^{\underline{a}}+b^{\underline{a}}+b^{\underline{a}} = 4b^{\underline{a}}$.

We have $4b^{\underline{a}} - (C_0+C_1+C_{a-1}+C_{a}) = 4b^{\underline{a}}-(b+1)^{\underline{a}}-a(b+1)^{\underline{a-1}}-a(b+1)-1 = b^{\underline{a-2}}[4(b-a+1)(b-a+1)-(b+1)(b-a+2)-a(b+1)]-a(b+1)-1 = b^{\underline{a-2}}[(b-a+2)(3b-4a+3)-a(b+1)]-a(b+1)-1$. Since $a \leq \frac{b}{2}$, we have $(b-a+2)(3b-4a+3)-a(b+1) \geq (b/2+2)(b+3)-(b/2)(b+1) = 3b+6$. It follows that $4b^{\underline{a}} - (C_0+C_1+C_{a-1}+C_{a}) \geq b^{\underline{a-2}}(3b+6)-a(b+1)-1 > 0$.

This proves that $\sum_{l=0}^{a}{{a \choose l}(b+1)^{\underline{l}}} < (a+1)b^{\underline{a}}$. It follows that $r\sum_{l=0}^{a}{{a \choose l}(b+1)^{\underline{l}}} < r(a+1)b^{\underline{a}} \leq (b+1)b^{\underline{a}}$ as desired.
\end{proof}

\begin{thm}
\label{precisefinal}
Let $3 < m \leq \frac{n+1}{r}$ where $r \geq 2$ is a positive integer. Then \[r\sum_{i=2}^{m}{{m \choose i}(n-1)^{\underline{m-i}}} < {m \choose 2}(n-1)^{\underline{m-1}}.\]
\end{thm}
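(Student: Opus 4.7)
My plan is to reduce Theorem \ref{precisefinal} to Lemma \ref{precise} via an index substitution followed by a pointwise bound on binomial coefficients. First I would reindex the sum by $l = m - i$: since $\binom{m}{m-l} = \binom{m}{l}$, the left-hand sum becomes
\[\sum_{i=2}^{m}\binom{m}{i}(n-1)^{\underline{m-i}} = \sum_{l=0}^{m-2}\binom{m}{l}(n-1)^{\underline{l}}.\]
The task is then to show that $r$ times this is strictly less than $\binom{m}{2}(n-1)^{\underline{m-1}}$.

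The key pointwise inequality I would use is $\binom{m}{l} \leq \binom{m}{2}\binom{m-2}{l}$ for $0 \leq l \leq m-2$, which is strict except when $l = m-2$. This follows from the identity $\binom{m}{l}(m-l)(m-l-1) = m(m-1)\binom{m-2}{l}$ together with $(m-l)(m-l-1) \geq 2$ on that range. Since $(n-1)^{\underline{l}} > 0$ for all $0 \leq l \leq m-2$, and the inequality is strict at, for instance, $l = 0$ (comparing $1$ with $\binom{m}{2} \geq 6$), summing termwise yields
\[\sum_{l=0}^{m-2}\binom{m}{l}(n-1)^{\underline{l}} < \binom{m}{2}\sum_{l=0}^{m-2}\binom{m-2}{l}(n-1)^{\underline{l}}.\]

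I would finish by applying Lemma \ref{precise} with $a = m-2$ and $b = n-2$, which gives $r\sum_{l=0}^{m-2}\binom{m-2}{l}(n-1)^{\underline{l}} \leq (n-1)(n-2)^{\underline{m-2}}$. The identity $(n-1)(n-2)^{\underline{m-2}} = (n-1)^{\underline{m-1}}$ then closes the chain. The hypotheses of the lemma are easily verified: $0 \leq a \leq b-1$ reduces to $m \leq n-1$, implied by $rm \leq n+1$ and $r \geq 2$; and $a+1 \leq (b+1)/r$ reduces to $r(m-1) \leq n-1$, which follows from $rm \leq n+1$ by subtracting $r \geq 2$ from both sides.

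The main thing to get right is identifying the correct parameters for Lemma \ref{precise}. The naive attempts (such as $b+1 = n-1$ with $a = m$ or $a = m-1$) fail to satisfy the lemma's hypothesis under the theorem's slightly weaker assumption. The shift $a = m-2$, $b+1 = n-1$ exactly consumes the $r \geq 2$ slack (turning $rm \leq n+1$ into $r(m-1) \leq n-1$) and simultaneously makes the right-hand side $(b+1)b^{\underline{a}}$ collapse to $(n-1)^{\underline{m-1}}$, which is precisely the quantity needed. The binomial bound $\binom{m}{l} \leq \binom{m}{2}\binom{m-2}{l}$ provides the factor of $\binom{m}{2}$ required on the right, and also supplies the strict inequality even though the lemma itself would only be used in its non-strict form at this choice of $a$.
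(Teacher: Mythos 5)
Your proof is correct, but it takes a genuinely different route from the paper's. The paper proves the inequality combinatorially: it partitions the parking-function sets $\breve{P}_{\geq2}(\tilde{S},m)$ and $\breve{P}_{0\&1}(S,m)$ into blocks indexed by a pair $(i,j)$ and an injection $f$, computes $P_{\geq2}(\tilde{S},m)^{i,j}(f) = \sum_{l=0}^{m-j}\binom{m-j}{l}(n-j+1)^{\underline{l}}$ and $P_{0\&1}(S,m)^{i,j}(f) = (n-j+1)(n-j)^{\underline{m-j}}$, and then applies Lemma \ref{precise} once per block with the varying parameters $a = m-j$, $b = n-j$ (checking $m-j+1 \leq \frac{n-j+1}{r}$ for each $j \geq 2$), before summing over all blocks. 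You instead work purely algebraically on the closed-form sums: the reindexing $l = m-i$, the identity $\binom{m}{l}(m-l)(m-l-1) = m(m-1)\binom{m-2}{l}$ giving the termwise bound $\binom{m}{l} \leq \binom{m}{2}\binom{m-2}{l}$ (strict at $l=0$), and a single application of Lemma \ref{precise} at $a = m-2$, $b = n-2$, whose hypotheses you verify correctly ($r(m-1) \leq n-1$ does follow from $rm \leq n+1$ and $r \geq 2$, and $(n-1)(n-2)^{\underline{m-2}} = (n-1)^{\underline{m-1}}$). Your argument is shorter and avoids the combinatorial bookkeeping entirely; what the paper's decomposition buys is that it is the template reused almost verbatim for general directed trees in Theorem \ref{sparsepark}, where no closed formulas are available and the blockwise comparison is the only viable strategy.
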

\begin{proof}
As described previously, partition $\breve{P}_{\geq2}(\tilde{S},m)$ into the sets $\breve{P}_{\geq2}(\tilde{S},m)^{i,j}(f)$ and partition $\breve{P}_{0\&1}(S,m)$ into the sets $\breve{P}_{0\&1}(S,m)^{i,j}(f)$. To prove the inequality, we compare $P_{\geq2}(\tilde{S},m)^{i,j}(f)$ and $P_{0\&1}(S,m)^{i,j}(f)$ for a fixed injection $f: U_{i,j} \rightarrow \tilde{S}-\{z\}$.

Fix a leaf vertex $x$. Note that an element of $\breve{P}_{\geq2}(\tilde{S},m)^{i,j}(f)$ is completely determined once we finish arranging all the cars preferring the leaves, and that an element of $\breve{P}_{0\&1}(S,m)^{i,j}(f)$ is completely determined once we finish arranging the car pair $(i,j)$ and the remaining cars. For each integer $0 \leq l \leq m-j$, let $\breve{P}_{\geq2}(\tilde{S},m)^{i,j}_l(f)$ denote the elements of $\breve{P}_{\geq2}(\tilde{S},m)^{i,j}(f)$ with exactly $|U_{i,j}|+l = j+l-2$ cars preferring leaves; i.e. we choose $l$ cars from $[m]-[j]$ to arrange them among the $n-1-|U_{i,j}| = n-j+1$ remaining leaves. If $j > 2$, let $\breve{P}_{0\&1}(S,m)^{i,j}_k(f)$ denote the elements of $\breve{P}_{0\&1}(S,m)^{i,j}(f)$ with car pair $(i,j)$ preferring the $(k+1)$st empty leaf after car $\min U_{i,j}$ clockwise. If $j = 2$, let $\breve{P}_{0\&1}(S,m)^{i,j}_k(f)$ denote the elements of $\breve{P}_{0\&1}(S,m)^{i,j}(f)$ with car pair $(i,j)$ preferring the $(k+1)$st leaf after $x$ clockwise. We can thus partition $\breve{P}_{\geq2}(\tilde{S},m)^{i,j}(f)$ into the sets $\breve{P}_{\geq2}(\tilde{S},m)^{i,j}_l(f)$, for $0 \leq l \leq m-j$. On the other hand, the sets $\breve{P}_{0\&1}(S,m)^{i,j}_k(f)$ partition $\breve{P}_{0\&1}(S,m)^{i,j}(f)$, for $0 \leq k \leq n-j$.

Notice that \[P_{\geq2}(\tilde{S},m)^{i,j}(f) = \sum_{l=0}^{m-j}{P_{\geq2}(\tilde{S},m)^{i,j}_l(f)} = \sum_{l=0}^{m-j}{{m-j \choose l}(n-j+1)^{\underline{l}}}\] and \[P_{0\&1}(S,m)^{i,j}(f) = \sum_{k=0}^{n-j}{P_{0\&1}(S,m)^{i,j}_k(f)} = (n-j+1)(n-j)^{\underline{m-j}}.\] Since $r \geq 2$ and $j \geq 2$, it is easy to check that $m-j+1 \leq \frac{n+1}{r}-j+1 = \frac{n+1-rj+r}{r} \leq \frac{n-j+1}{r}$. Letting $a = m-j$ and $b = n-j$, we have $a \leq b-1$ and $b \leq n-2$, and it follows by Lemma \ref{precise} that $rP_{\geq2}(\tilde{S},m)^{i,j}(f) \leq P_{0\&1}(S,m)^{i,j}(f)$ for all $i < j \in [m]$ and all injections $f: U_{i,j} \rightarrow [n-1]$, with equality possible only in the case $j=m$. Consequently, we have \[\sum_{i<j}{\sum_{f}{rP_{\geq2}(\tilde{S},m)^{i,j}(f)}} < \sum_{i<j}{\sum_{f}{P_{0\&1}(S,m)^{i,j}(f)}}\] and hence \[r\sum_{i=2}^{m}{{m \choose i}(n-1)^{\underline{m-i}}} < {m \choose 2}(n-1)^{\underline{m-1}}\] as desired.
\end{proof}

The following lemma will help us obtain a different inequality dealing with larger values of $m$.
\begin{lem}
\label{premaxbound}
For integers $a$ and $b$ with $0 \leq a \leq b-1$ and $a \leq \frac{2}{3}b$, we have \[\sum_{l=0}^{a}{{a \choose l}(b+1)^{\underline{l}}} < (b+1)b^{\underline{a}}\]
\end{lem}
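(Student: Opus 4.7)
The plan is to follow the same broad strategy as Lemma \ref{precise}: verify the small cases ($a \leq 3$) by direct computation -- each reduces to an elementary polynomial inequality in $b$ valid whenever $a \leq \tfrac{2}{3}b$ -- and for $a \geq 4$ work with the summands $C_l := \binom{a}{l}(b+1)^{\underline{l}}$. Setting $c := b - a$ (so that the hypothesis becomes $c \geq a/2$), the identity
\[
C_{a-k}/b^{\underline{a}} = \binom{a}{k}(b+1)/[(c+1)(c+2)\cdots(c+k+1)]
\]
rephrases the target inequality as $\sum_{k=0}^{a}\binom{a}{k}/[(c+1)(c+2)\cdots(c+k+1)] < 1$.

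The principal obstacle is that the cornerstone estimate $C_{a-2} < b^{\underline{a}}$ used in Lemma \ref{precise} can fail under the weaker hypothesis here: for $a$ close to $\tfrac{2}{3}b$, several of the tail terms $C_{a-2}, C_{a-1}, C_a$ may each exceed $b^{\underline{a}}$. Fortunately, the conclusion we need is also much weaker than the intermediate estimate $\sum_l C_l < (a+1)b^{\underline{a}}$ proved in Lemma \ref{precise}, so there is additional slack of a factor $\sim(b+1)/(a+1)$. Concretely, I would split the reduced sum into a leading portion (small $k$, where the rising product in the denominator is small but so is the binomial coefficient) and a tail (large $k$, where the denominator dominates). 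For the tail, the ratio of consecutive summands is $(a-k)/[(k+1)(c+k+2)]$, which shrinks rapidly once $k$ grows past $O(a/c)$, yielding a geometric-series bound; for the head, direct term-by-term bounds using $c+1 \geq a/2 + 1$ should suffice.

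The hardest step is combining these estimates so that the total is strictly less than $1$ uniformly for all $c \geq a/2$. For very small $c$ the geometric-series bound becomes loose, so a separate finite case analysis -- verifying the reduced inequality by direct computation for, say, $c \leq c_0$ and $a \leq 2c_0$ with some small $c_0$ -- will likely be necessary to patch these boundary cases, mirroring the role of the base cases $a \leq 3$ earlier in the argument.
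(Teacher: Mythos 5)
Your plan is correct and follows essentially the same route as the paper's proof: after normalizing the summands (the paper divides by $(b+1)^{\underline{a}}$ rather than $b^{\underline{a}}$, a cosmetic difference), the paper likewise bounds the few dominant terms $C_a,\ldots,C_{a-3}$ individually, controls the remaining terms by a geometric series with ratio $\tfrac{4}{15}$, and settles the loose boundary cases $b-a\leq 6$ by a finite check, exactly as you propose for small $c=b-a$. Your observations that the key estimate $C_{a-2}<b^{\underline{a}}$ from Lemma \ref{precise} fails here and that the slack now comes from the factor $b-a+1$ are both accurate and match where the paper's argument actually draws its strength.
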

\begin{proof}
Notice that $(b+1)b^{\underline{a}} = (b+1)^{\underline{a+1}} = (b+1)^{\underline{a}}(b-a+1)$. We have the following crude upper bounds for $C_l = {a \choose l}(b+1)^{\underline{l}}$:
\begin{enumerate}
\item $C_a = (b+1)^{\underline{a}}$
\item $C_{a-1} = \frac{a}{b-a+2}(b+1)^{\underline{a}} \leq \frac{\frac{2}{3}b}{\frac{1}{3}b+2}(b+1)^{\underline{a}} \leq 2(b+1)^{\underline{a}}$
\item $C_{a-2} = \frac{a(a-1)}{2(b-a+3)(b-a+2)}(b+1)^{\underline{a}} \leq \frac{\frac{2}{3}b(\frac{2}{3}b-1)}{2(\frac{1}{3}b+3)(\frac{1}{3}b+2)} \leq 2(b+1)^{\underline{a}}$
\item $C_{a-3} = \frac{a(a-1)(a-2)}{3 \cdot 2(b-a+4)(b-a+3)(b-a+2)}(b+1)^{\underline{a}} \leq \frac{(\frac{2}{3}b)(\frac{2}{3}b-1)(\frac{2}{3}b-2)}{3\cdot2(\frac{1}{3}b+4)(\frac{1}{3}b+3)(\frac{1}{3}b+2)}(b+1)^{\underline{a}} \leq \frac{4}{3}(b+1)^{\underline{a}}$
\item Continuing in this manner, it is straightforward to check that \[C_{a-i} \leq (\frac{2}{3})(\frac{4}{15})^{i-4}(b+1)^{\underline{a}}\] for $i \geq 4$.
\end{enumerate}
It follows that $\sum_{l=0}^{a}{{a \choose l}(b+1)^{\underline{l}}} \leq (5 + \frac{4}{3}+ \frac{2}{3}\sum_{i=0}^{\infty}{(\frac{4}{15})^i})(b+1)^{\underline{a}} = (2\frac{8}{33}+5)(b+1)^{\underline{a}} = (7\frac{8}{33})(b+1)^{\underline{a}}$, and hence $\sum_{l=0}^{a}{{a \choose l}(b+1)^{\underline{l}}} < (b+1)^{\underline{a}}(b-a+1)$ whenever $b-a \geq 7$.

In the case $b-a \leq 6$, we have $a \leq \frac{2}{3}b \leq 2(b-a) \leq 12$. For $b-a = 6, 5, \ldots, 1$ we can choose $a = 2(b-a)$ and it is straightforward to check individually that the bound $\sum_{l=0}^{a}{{a \choose l}(b+1)^{\underline{l}}} = [1+\frac{a}{b-a+2}+\frac{a(a-1)}{2(b-a+3)(b-a+2)}+\frac{a(a-1)(a-2)}{3 \cdot 2(b-a+4)(b-a+3)(b-a+2)}+\ldots](b+1)^{\underline{a}} < (b-a+1)(b+1)^{\underline{a}}$ holds.
\end{proof}

\begin{thm}
\label{maxbound}
If $3 < m \leq \frac{2}{3}n$, then \[\sum_{i=2}^{m}{{m \choose i}(n-1)^{\underline{m-i}}} < {m \choose 2}(n-1)^{\underline{m-1}}\] or equivalently \[\sum_{i=0}^{m}{{m \choose i}(n-1)^{\underline{m-i}}} < n^{\underline{m}}+{m \choose 2}(n-1)^{\underline{m-1}}.\]
\end{thm}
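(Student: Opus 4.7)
The plan is to mirror the argument of Theorem \ref{precisefinal} essentially verbatim, with Lemma \ref{premaxbound} in place of Lemma \ref{precise} and with the factor $r$ effectively set to $1$. All the partitioning, counting, and summing steps carry over; only the hypothesis check and the invoked lemma are different.

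Concretely, I would partition $\breve{P}_{\geq2}(\tilde{S},m)$ into the sets $\breve{P}_{\geq2}(\tilde{S},m)^{i,j}(f)$ and $\breve{P}_{0\&1}(S,m)$ into the sets $\breve{P}_{0\&1}(S,m)^{i,j}(f)$ exactly as in the previous theorem, ranging over $i < j \in [m]$ and injections $f : U_{i,j} \to \tilde{S} - \{z\}$. The same counting argument produces
\[P_{\geq2}(\tilde{S},m)^{i,j}(f) = \sum_{l=0}^{m-j}{m-j \choose l}(n-j+1)^{\underline{l}}\]
and
\[P_{0\&1}(S,m)^{i,j}(f) = (n-j+1)(n-j)^{\underline{m-j}}.\]
I would then set $a = m-j$ and $b = n-j$ and apply Lemma \ref{premaxbound}. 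The condition $a \leq b-1$ reduces to $m \leq n-1$, which holds because $m < n$. The condition $a \leq \tfrac{2}{3}b$ becomes $m - j \leq \tfrac{2}{3}(n-j)$, which rearranges to $\bigl(\tfrac{2}{3}n - m\bigr) + \tfrac{j}{3} \geq 0$; this holds under the hypothesis $m \leq \tfrac{2}{3}n$ and $j \geq 2$. Lemma \ref{premaxbound} then gives the strict inequality $P_{\geq2}(\tilde{S},m)^{i,j}(f) < P_{0\&1}(S,m)^{i,j}(f)$ for every admissible triple $(i,j,f)$.

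Summing these strict inequalities over all $i < j \in [m]$ and all injections $f : U_{i,j} \to [n-1]$ recovers the first inequality in the statement. The equivalent second form then follows by adding $n^{\underline{m}}$ to both sides and invoking Lemma \ref{cancelcom}, which identifies $n^{\underline{m}}$ with the $i = 0, 1$ contributions to $\sum_{i=0}^{m}{m \choose i}(n-1)^{\underline{m-i}}$.

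The main obstacle is essentially absent at this stage, since Lemma \ref{premaxbound} has already absorbed the real work (the geometric-series estimate on the tail $C_{a-i}$ and the finitely many small cases $b - a \leq 6$). The only item requiring care is transporting the hypothesis $m \leq \tfrac{2}{3}n$ through the substitution $a = m-j$, $b = n-j$, but that is a one-line inequality check, and the scaffolding built for Theorem \ref{precisefinal} makes the rest purely formal.
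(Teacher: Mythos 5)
Your proposal is correct and follows essentially the same route as the paper: the identical partition into the sets $\breve{P}_{\geq2}(\tilde{S},m)^{i,j}(f)$ and $\breve{P}_{0\&1}(S,m)^{i,j}(f)$, the same closed forms for their cardinalities, and the substitution $a = m-j$, $b = n-j$ feeding into Lemma \ref{premaxbound}, with the hypothesis check $m-j \leq \tfrac{2}{3}(n-j)$ carried out just as in the paper's one-line verification.
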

\begin{proof}
Partition $\breve{P}_{\geq2}(\tilde{S},m)$ into the sets $\breve{P}_{\geq2}(\tilde{S},m)^{i,j}(f)$ and partition $\breve{P}_{0\&1}(S,m)$ into the sets $\breve{P}_{0\&1}(S,m)^{i,j}(f)$. We have $P_{\geq2}(\tilde{S},m)^{i,j}(f) = \sum_{l=0}^{m-j}{{m-j \choose l}(n-j+1)^{\underline{l}}}$ and $P_{0\&1}(S,m)^{i,j}(f) = (n-j+1)(n-j)^{\underline{m-j}}.$ Notice that $m-j \leq \frac{2}{3}n-j = \frac{2n-3j}{3} \leq \frac{2}{3}(n-j)$. Letting $a = m-j$ and $b = n-j$, it follows by Lemma \ref{premaxbound} that $P_{\geq2}(\tilde{S},m)^{i,j}(f) < P_{0\&1}(S,m)^{i,j}(f)$ for all $i < j \in [m]$ and all injections $f: U_{i,j} \rightarrow [n-1]$. Consequently, we have \[\sum_{i<j}{\sum_{f}{P_{\geq2}(\tilde{S},m)^{i,j}(f)}} < \sum_{i<j}{\sum_{f}{P_{0\&1}(S,m)^{i,j}(f)}}\] and hence \[\sum_{i=2}^{m}{{m \choose i}(n-1)^{\underline{m-i}}} < {m \choose 2}(n-1)^{\underline{m-1}}\] as desired.
\end{proof}


\section{General Directed Trees \label{dirtree}}
We now consider the situation where a general directed tree is sparsely-parked, in other words there are far fewer cars than parking spots in some sense. This situation has remarkable similarities with the directed star case analyzed previously, and can be studied via an extension of the previous approach. 

Let $T$ denote a sink tree with $n$ vertices with root $z$, and let $\tilde{T}$ denote the corresponding source tree. For two vertices $u, v$ of $\tilde{T}$ where $v$ is reachable from $u$, let $\mathrm{path}(u,v)$ denote the unique directed path connecting them; $\mathrm{path}(u,v)$ is defined analogously for $T$. Let $\mathrm{leaf}(u)$ denote the minimal leaf connected to $u$ by a directed path of maximal length. Let $  \mathrm{minleafdist}(T)  $ denote the number of vertices in the shortest path from a leaf to a vertex of degree at least three in $T$; $  \mathrm{minleafdist}(\tilde{T})  $ is defined the same way.

If $I$ is a directed path in $\tilde{T}$ starting at $u$, define the neighborhood $N(I)$ of $I$ to be the set of vertices adjacent to $I$, excluding the parent of $u$; intuitively, an element of $N(I)$ is the start of a branch attached to $I$.

\begin{exm} Suppose $\tilde{T}$ is the following digraph.

\begin{tikzpicture}
\node[shape=circle,draw=black] (1) at (0,0) {1};
\node[shape=circle,draw=black] (2) at (0,1) {2};
\node[shape=circle,draw=black] (3) at (0,-1) {3};
\node[shape=circle,draw=black] (4) at (1,0) {4};
\node[shape=circle,draw=black] (5) at (-1,0) {5};
\node[shape=circle,draw=black] (6) at (0,2) {6};
\node[shape=circle,draw=black] (7) at (-1,2) {7};
\node[shape=circle,draw=black] (8) at (-2,2) {8};
\node[shape=circle,draw=black] (9) at (1,2) {9};
\node[shape=circle,draw=black] (10) at (2,2) {10};
\node[shape=circle,draw=black] (11) at (3,2) {11};
\node[shape=circle,draw=black] (12) at (-2,0) {12};
\node[shape=circle,draw=black] (13) at (2,1) {13};

\path [->] (1) edge (2);
\path [->] (1) edge (3);
\path [->] (1) edge (4);
\path [->] (1) edge (5);
\path [->] (2) edge (6);
\path [->] (6) edge (7);
\path [->] (7) edge (8);
\path [->] (9) edge (10);
\path [->] (10) edge (11);
\path [->] (5) edge (12);
\path [->] (6) edge (9);
\path [->] (10) edge (13);
\end{tikzpicture}

We have $z = 1$, $\mathrm{path}(2,10) = (2,6,9,10)$, $\mathrm{leaf}(2) = 11$, $N(2,6) = \{7,9\}$, $N(9,10,11) = \{13\}$, and $  \mathrm{minleafdist}(\tilde{T})   = 2$.
\end{exm}

We now introduce a certain ``flip'' operation for parking functions on a path, which will be useful for studying the effects of reversing the orientation of $\tilde{T}$. Given a parking function $(\tilde{T},s)$ and a directed path $I$ in $\tilde{T}$, define $\mathrm{flip}_I(s)$ as follows. Label the vertices of $I$ as $w_1 \preceq_I w_2 \preceq_I \ldots \preceq_I w_{n_I}$. For the cars $c_1 < c_2 < \ldots < c_{m_I}$ preferring vertices in $I$, label their preferences as $w_{f(1)}, w_{f(2)}, \ldots, w_{f(m_I)} \in I$ respectively. Then $\mathrm{flip}_I(s)$ is the sequence (not necessarily a parking function) obtained from $s$ by changing the preferences of the cars $c_1 < c_2 < \ldots < c_{m_I}$ to $w_{n_I-f(1)+1}, w_{n_I-f(2)+1}, \ldots, w_{n_I-f(m_I)+1}$ respectively and fixing the preferences of all other cars. We also define $\mathrm{flip}_I(w_k) := w_{n_I-k+1}$. Intuitively, $\mathrm{flip}_I(s)$ is obtained by reversing the preferences of the cars along $I$. 

We now define a $\mathrm{flip}^*$ operation on the entire tree that is independent of the specific parking function; this operation will yield an involution from $P(\tilde{T},m)$ to $P(T,m)$ in certain cases. Given $s \in P(\tilde{T},m)$, define $\mathrm{flip}^*(s)$ as follows:
\begin{enumerate}
\item For each $u \in N(z)$, apply $\mathrm{flip}_{\mathrm{path}(u,\mathrm{leaf(u)})}$.
\item For each $u_1 \in N(\mathrm{path}(u,\mathrm{leaf}(u)))$, apply $\mathrm{flip}_{\mathrm{path}(u_1,\mathrm{leaf}(u_1))}$.
\item For each $u_2 \in N(\mathrm{path}(u_1,\mathrm{leaf}(u_1)))$, apply $\mathrm{flip}_{\mathrm{path}(u_2,\mathrm{leaf}(u_2))}$.
\item In general, for each $u_{i+1} \in N(\mathrm{path}(u_i,\mathrm{leaf}(u_i)))$, apply $\mathrm{flip}_{\mathrm{path}(u_{i+1},\mathrm{leaf}(u_{i+1}))}$.
\end{enumerate}
For a vertex $v$ we also define $\mathrm{flip}^*(v)$ to be $\mathrm{flip}_I(v)$ where $I$ is the predetermined path $v$ is on in the above process; we will call such $I$ a \textbf{flip-path} of $\tilde{T}$. The same operation can be defined from $P(T,m)$ to $P(\tilde{T},m)$. Roughly speaking, $\mathrm{flip}^*$ fixes the cars preferring $z$ in both situations, while reflecting the preferences on the ``branches''. We will be interested in situations where applying $\mathrm{flip}^*$ to $s$ and reversing orientation produces a parking function on $T$.

\begin{rem}
A type of ``flip'' operation had been used in \cite{KingYan} to show that $P(T,n) \leq P(\tilde{T},n)$. In that situation, the flip of preferences depends on the specific parking function.
\end{rem}

\begin{exm}
 Suppose $\tilde{T}$ is the following digraph.

\begin{tikzpicture}
\node[shape=circle,draw=black] (1) at (0,0) {1};
\node[shape=circle,draw=black] (2) at (0,1) {2};
\node[shape=circle,draw=black] (3) at (0,-1) {3};
\node[shape=circle,draw=black] (4) at (1,0) {4};
\node[shape=circle,draw=black] (5) at (-1,0) {5};
\node[shape=circle,draw=black] (6) at (-1,2) {6};
\node[shape=circle,draw=black] (7) at (-2,2) {7};
\node[shape=circle,draw=black] (8) at (-3,2) {8};
\node[shape=circle,draw=black] (9) at (1,2) {9};
\node[shape=circle,draw=black] (10) at (2,2) {10};
\node[shape=circle,draw=black] (11) at (3,2) {11};
\node[shape=circle,draw=black] (12) at (-2,0) {12};
\node[shape=circle,draw=black] (13) at (-3,0) {13};
\node[shape=circle,draw=black] (14) at (-4,0) {14};
\node[shape=circle,draw=black] (15) at (2,0) {15};
\node[shape=circle,draw=black] (16) at (3,0) {16};
\node[shape=circle,draw=black] (17) at (4,0) {17};
\node[shape=circle,draw=black] (18) at (-1,-1) {18};
\node[shape=circle,draw=black] (19) at (-2,-1) {19};
\node[shape=circle,draw=black] (20) at (-3,-1) {20};
\node[shape=circle,draw=black] (21) at (3,-1) {21};
\node[shape=circle,draw=black] (22) at (3,-2) {22};

\path [->] (1) edge (2);
\path [->] (1) edge (3);
\path [->] (1) edge (4);
\path [->] (1) edge (5);
\path [->] (2) edge (6);
\path [->] (6) edge (7);
\path [->] (7) edge (8);
\path [->] (2) edge (9);
\path [->] (9) edge (10);
\path [->] (10) edge (11);
\path [->] (5) edge (12);
\path [->] (12) edge (13);
\path [->] (13) edge (14);
\path [->] (4) edge (15);
\path [->] (15) edge (16);
\path [->] (16) edge (17);
\path [->] (3) edge (18);
\path [->] (18) edge (19);
\path [->] (19) edge (20);
\path [->] (16) edge (21);
\path [->] (21) edge (22);
\end{tikzpicture}

For the parking function $s = (2,2,10,11)$, we have $\mathrm{flip}^*(s) = (8,8,10,9)$. For the parking function $s = (6,6,6,14,12)$, we have $\mathrm{flip}^*(s) = (7,7,7,5,13)$. For the parking function $s = (4,16,16,22)$, we have $\mathrm{flip}^*(s) = (17,15,15,21)$.
\end{exm}

In our comparison of $P(\tilde{T},m)$ and $P(T,m)$, complicated sets of parking functions, for which no nice formulae for cardinalities are known, will need to be considered. Nonetheless, the following crude estimates will be enough for our purposes.

We will be considering the situation where $j \leq m$ cars have already made their preferences in $T$ and $\tilde{T}$. The preferences of these $j$ cars yield a ``restricted'' parking function $g[T,j]: [j] \rightarrow T$ and $g[\tilde{T},j]: [j] \rightarrow \tilde{T}$ on $T$ and $\tilde{T}$ respectively. We are interested in analyzing how many ways the remaining cars $[m]-[j]$ can make their preferences to ``complete'' $g[T,j]$ to an element of $\breve{P}(T,m)$ and $g[\tilde{T},j]$ to an element of $\breve{P}(\tilde{T},m)$. 

\begin{lem}
\label{crudebounds}
Let $G$ be a digraph with $n$ vertices and $m$ cars trying to park on it. Let $\breve{P}(G,m)[g]$ denote the subset of $\breve{P}(G,m)$ whose cars $[j]$ have already made their preferences according to a predetermined parking function $g: [j] \rightarrow G$. 
\begin{enumerate}
\item \label{pfestimates} For any $l_1 < l_2 \leq m-j$ we have \[P(G,j+l_1)[g] \leq \frac{P(G,j+l_2)[g]}{(n-j-l_1)^{\overline{l_2-l_1}}}.\] 
\item \label{mustpark} Suppose $G$ is acyclic. Let $l \leq m-j$ and let $v \in [n]$ be a vertex that no cars in $[j]$ will occupy but whose outgoing paths will be completely occupied by cars in $[j]$. Suppose also that, for any vertex $v'$ with $\deg(v') \geq 3$ connected to $v$, $\mathrm{path(v',v)}$ has at least $m-j$ vertices. Let $\breve{P}(G,j+l)[g,\hat{v}]$ be the subset of $\breve{P}(G,j+l)[g]$ such that no car will park at $v$, and let $\breve{P}(G,j+l)[g,\bar{v}] = \breve{P}(G,j+l)[g]-\breve{P}(G,j+l)[g,\hat{v}]$ be the subset of $\breve{P}(G,j+l)[g]$ such that one car may park at $v$. Then \[P(G,j+l)[g,\bar{v}] \leq \frac{lm}{n-j-l+1}P(G,j+l)[g,\hat{v}].\]
\end{enumerate}
\end{lem}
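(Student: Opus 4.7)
For part (\ref{pfestimates}) the plan is to extend each $s' \in \breve{P}(G,j+l_1)[g]$ to an element of $\breve{P}(G,j+l_2)[g]$ by adjoining cars $j+l_1+1, \ldots, j+l_2$ one at a time, each with a preference chosen among the currently unoccupied vertices (so each new car parks immediately at its preferred vertex). After the $k$-th such extension there are $n - j - l_1 - k$ unoccupied vertices remaining, so the total number of extensions of a single $s'$ is the falling product $(n-j-l_1)(n-j-l_1-1)\cdots(n-j-l_2+1)$ of $l_2-l_1$ factors. Extensions from distinct $s'$ already disagree in the first $j+l_1$ coordinates, and distinct extensions of the same $s'$ are distinct parking functions, so summing over $s'$ yields the bound.

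For part (\ref{mustpark}) the first step is to observe that any $s \in \breve{P}(G,j+l)[g,\bar{v}]$ has a unique additional car $c(s) \in \{j+1, \ldots, j+l\}$ parking at $v$. Indeed, since every out-neighbor of $v$ is already occupied by a car in $[j]$, any car arriving at $v$ once it is occupied would have no valid move and the parking process would fail. Deleting car $c(s)$ and re-indexing gives $s^- \in \breve{P}(G,j+l-1)[g,\hat{v}]$: validity holds because no car strictly after $c(s)$ in $s$ can have attempted to pass through $v$ (same reason as above), so freeing $v$ leaves every other trajectory unchanged, and $s^- \in \hat{v}$ because $v$ was solely occupied by $c(s)$. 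This sets up the injection
\[
\phi: \breve{P}(G,j+l)[g,\bar{v}] \hookrightarrow \{j+1,\ldots,j+l\} \times [n] \times \breve{P}(G,j+l-1)[g,\hat{v}], \qquad s \mapsto \bigl(c(s),\, s_{c(s)},\, s^-\bigr).
\]
To bound the image I would fix $(s^-, c)$ and observe that a preference $u$ appears in the image iff inserting a car at position $c$ of $s^-$ with preference $u$ makes that car park at $v$. This forces $u = v$ or $u$ to be one of the $c-1$ vertices occupied at car $c$'s turn in $s^-$ (with a trajectory to $v$), giving at most $c$ valid values. Summing gives $\sum_{c=j+1}^{j+l} c \leq lm$ since $j+l \leq m$, so
\[
P(G,j+l)[g,\bar{v}] \leq lm \cdot P(G,j+l-1)[g,\hat{v}].
\]
The plan is to finish by applying the $\hat{v}$-restricted version of part (\ref{pfestimates}), which upgrades $P(G,j+l-1)[g,\hat{v}]$ to $P(G,j+l)[g,\hat{v}]/(n-j-l+1)$ by extending each $s^- \in \hat{v}$ by a further car whose preference keeps $v$ unoccupied.

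The main obstacle I anticipate is extracting the full factor $n-j-l+1$ from this final $\hat{v}$-restricted extension rather than the naive $n-j-l$ obtained by restricting preferences to unoccupied vertices other than $v$. This is exactly where the hypothesis that every branching vertex $v'$ with a directed path to $v$ has $|\mathrm{path}(v',v)| \geq m-j$ enters: with fewer than $m$ cars in total and at least $m-j$ unbranched vertices feeding into $v$, one can always locate an extra preference along the unbranched upstream chain that safely routes the added car to a non-$v$ vertex without duplicating any of the $n-j-l$ direct unoccupied-preference extensions, supplying the missing unit of slack in the denominator.
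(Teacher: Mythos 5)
Your part (\ref{pfestimates}) and the overall architecture of your part (\ref{mustpark}) --- delete the unique car that parks at $v$, bound the number of (car, preference) pairs by $l\cdot m$, then reinflate $P(G,j+l-1)[g,\hat{v}]$ back to $P(G,j+l)[g,\hat{v}]$ via part (\ref{pfestimates}) --- coincide with the paper's proof. Your flagged ``obstacle'' at the final step is also a fair catch: the paper simply cites part (\ref{pfestimates}) for the last inequality, but the extension argument applied to the $\hat{v}$-restricted sets only yields the factor $n-j-l$, since one of the $n-j-l+1$ unoccupied vertices is $v$ itself and preferring it would exit $\hat{v}$. However, your proposed repair is not convincing as stated: a car preferring an occupied vertex on the unbranched chain feeding into $v$ either parks at the first unoccupied vertex of that chain downstream (which need not exist) or slides all the way into $v$, so no extra safe preference is guaranteed. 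The honest fix is to accept the factor $n-j-l$, which is harmless for the application in Theorem \ref{sparsepark}.

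The genuine gap is at the step ``$s^- \in \breve{P}(G,j+l-1)[g,\hat{v}]$.'' Because the parking process is nondeterministic and $\breve{P}(\cdot)[g,\bar{v}]$ consists of the sequences for which \emph{some} successful run puts a car at $v$, membership of $s^-$ in $\hat{v}$ requires that \emph{no} successful run of $s^-$ parks any of the remaining $l-1$ new cars at $v$ --- not merely that the run inherited from $s$ leaves $v$ empty, which is all your argument (``$v$ was solely occupied by $c(s)$'') establishes. A priori, deleting $c(s)$ could open up a different run in which another new car is routed to $v$, in which case your injection would land in $\breve{P}(G,j+l-1)[g]$ rather than in the $\hat{v}$ component, breaking the chain of inequalities. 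This is precisely where the paper spends the hypothesis that every $v'$ with $\deg(v')\geq 3$ connected to $v$ has $\mathrm{path}(v',v)$ with at least $m-j$ vertices: the in-path to $v$ is an unbranched chain too long for the few remaining cars to fill and traverse, so none of the other $l-1$ cars can possibly reach $v$ under any run. Your write-up never invokes this hypothesis for this purpose (you instead try to spend it on the off-by-one in the denominator), so the key containment is unjustified.
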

\begin{proof}
The cars $[j]$ will occupy exactly $j$ vertices in the parking process. 
\begin{enumerate}
\item Any $s \in \breve{P}(G,j+l_1)[g]$ results in a parking arrangement in which exactly $j+l_1$ vertices are occupied. To complete $s$ to an $s' \in \breve{P}(G,j+l_2)[g]$, there are at least $n-j-l_1$ vertices that the $(j+l_1+1)$st car can prefer, at least $n-j-l_1-1$ vertices that the $(j+l_1+2)$st car can prefer, and so on. Thus, we have $(n-j-l_1)^{\overline{l_2-l_1}}P(G,j+l_1)[g] \leq P(G,j+l_2)[g]$.
\item 

Since $G$ has no cycles, there is at most one path connecting $v$ to any other vertex. For $v$ to be occupied, there must be a car that prefers an ingoing path to $v$ and ultimately parks at $v$. Let $s \in \breve{P}(G,j+l)[g,\bar{v}]$. $s$ has at most $l$ cars that can be chosen to park at $v$. Let $i_c \in [m]-[j]$ denote this chosen car and let $v_c$ denote its preferred vertex. The other $l-1$ car preferences constitute (up to relabeling) an element of $\breve{P}(G,j+l-1)[g,\hat{v}]$, because all outgoing paths from $v$ are unavailable for parking, all vertices on $\mathrm{path}(v_c,v)$ except possibly the endpoints have degree two, and there are at least $m-j-2$ vertices between $v$ and any (other) vertex of degree at least three; in other words, none of the other $l-1$ cars can possibly park at $v$. Since $i_c$ must prefer an ingoing path completely parked (except for $v$ itself) by cars having labels smaller than $i_c$ in order to park at $v$, there are at most $m$ choices for $v_c$. Hence there are at most $l \leq m-j$ ways to choose $i_c$, at most $P(G,j+l-1)[g,\hat{v}]$ ways to determine the other car preferences, and at most $m$ vertices for $i_c$ to prefer. It follows that $P(G,j+l)[g,\bar{v}] \leq lmP(G,j+l-1)[g,\hat{v}] \leq \frac{lm}{n-j-l+1}P(G,j+l)[g,\hat{v}]$, the last inequality following from Item \ref{pfestimates}.
\end{enumerate}
\end{proof}

\begin{thm}
\label{sparsepark}
Let $T$ be a sink tree with $n$ vertices having root $z$, and let $\tilde{T}$ denote the corresponding source tree. Let $m \geq 2$ be the number of cars attempting to park in the tree. If $m \leq \min(|N(z)|,  \mathrm{minleafdist}(T)  )$, then $P(T,m) > P(\tilde{T},m)$.
\end{thm}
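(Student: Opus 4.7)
The plan is to adapt the three-case analysis of Section \ref{dirstar} to general directed trees, using the flip* operation as the analogue of the implicit parking-function swap used there, and using the crude bounds of Lemma \ref{crudebounds} to control the cases where this swap fails to be bijective. Under the two hypotheses, $m \leq \mathrm{minleafdist}(T)$ ensures each flip-path is long enough to absorb up to $m$ cars without cascading into neighboring flip-paths, while $m \leq |N(z)|$ ensures there is always an unused branch available at the root.

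First, as the analog of Lemma \ref{cancelcom}, I would show that flip* is a bijection between the subset of $\breve{P}(\tilde{T},m)$ consisting of parking functions with at most one car preferring $z$ and the corresponding subset of $\breve{P}(T,m)$. The reason is that for such parking functions, the hypothesis $m \leq \mathrm{minleafdist}(T)$ allows all cars to park entirely within their flip-paths, and classical parking on a path of length $\geq m$ is preserved under position-reflection, which is exactly the branch-level action of flip*. Each flip-path thus yields an independent classical parking problem whose solution set is symmetric under orientation reversal.

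Next I would construct an injection from the remaining parking functions on $\tilde{T}$ (those with at least two cars preferring $z$) into parking functions on $T$ with two cars preferring the same leaf. The idea is to apply flip* to all cars not preferring $z$ and redirect the two smallest cars preferring $z$ to a canonically chosen leaf in a branch unused by the other cars --- a valid choice because $m \leq |N(z)|$ leaves at least two branches free. The image lies in $\breve{P}(T,m)$ because $m \leq \mathrm{minleafdist}(T)$ ensures the chosen branch has room for the two redirected cars to park (one at the leaf, the other cascading toward $z$). To establish strict inequality, I would exhibit parking functions in $\breve{P}(T,m)$ with colliding pairs on non-canonical branches --- which exist in abundance since $|N(z)| \geq m \geq 2$ --- and invoke the star-case inequality of Theorem \ref{precisefinal} applied branch-by-branch for the quantitative comparison.

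The main obstacle I expect is the interaction of flip-paths at branching vertices. In the sink tree $T$, cars on a sub-flip-path can in principle cascade out into its parent flip-path; in $\tilde{T}$, cars on the parent can overflow into sub-flip-paths via branching choices at internal vertices. Arguing that $m \leq \mathrm{minleafdist}(T)$ rules out both of these cross-branch flows requires a careful inductive argument along the flip-path hierarchy, using Lemma \ref{crudebounds}(\ref{mustpark}) --- whose geometric assumption is exactly the $\mathrm{minleafdist}$ hypothesis --- to control forced cascades into vertices of degree $\geq 3$. Completing this rigorously, and in particular verifying that the constructed injection from the $\geq 2$-at-$z$ case genuinely lands in $\breve{P}(T,m)$, is the technical heart of the proof.
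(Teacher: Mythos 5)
There is a genuine gap in your treatment of the third case, and it traces back to transplanting the star-case description ``two cars prefer the same leaf'' without its underlying mechanism. In the sink star, when two cars prefer a common leaf the second car drives to the leaf's parent, which \emph{is} the root $z$; so Case 3a there is really ``a car parks at $z$ without preferring it.'' In a general sink tree with $\mathrm{minleafdist}(T) \geq m \geq 2$, a leaf's parent is not $z$, so your proposed injection --- redirecting the two smallest $z$-preferring cars to a canonical leaf of an otherwise empty branch --- produces a parking function on $T$ in which no car prefers $z$ \emph{and no car parks at} $z$ (the second redirected car stops at the leaf's parent). But that is exactly the set you already matched bijectively via $\mathrm{flip}^*$ in your first step (the analogue of Case 2), so your injection's image collides with the image of that bijection and the count double-books the $T$ side. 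Relatedly, your opening claim that $\mathrm{flip}^*$ bijects the ``at most one car prefers $z$'' subsets is false as stated: on $T$ that subset also contains parking functions where no car prefers $z$ yet a car cascades into $z$ along a fully occupied path, and these have no preimage under $\mathrm{flip}^*$ among the $\tilde{T}$-parking functions with at most one car preferring $z$ (on the source tree $\tilde{T}$ no car can reach $z$ without preferring it). The paper's proof isolates precisely this set, $\breve{P}_{0\&1}(T,m)$ = ``no car prefers $z$, one car parks at $z$,'' as the $T$-side surplus to be weighed against $\breve{P}_{\geq 2}(\tilde{T},m)$.

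The quantitative step also needs more than ``apply Theorem \ref{precisefinal} branch-by-branch'': the branches are paths rather than stars, and the number of ways to complete a partial preference sequence depends on the whole tree, for which no closed formula is available. The paper instead fixes the pair $(i,j)$ and the restricted parking function $f_{\tilde{T}}$ of the cars in $U_{i,j}$, transports $f_{\tilde{T}}$ by $\mathrm{flip}^*$, and exploits the asymmetry that on $\tilde{T}$ the pair $(i,j)$ must prefer $z$ itself, while on $T$ it may prefer any of at least $|N(z)| \geq m$ vertices whose path to $z$ becomes fully occupied; the resulting $m'$-fold multiplicity on the $T$ side is then played off against the $(m-j+1)$-fold partition of the $\tilde{T}$ side by the number $l$ of trailing cars not preferring $z$, with Lemma \ref{crudebounds} supplying the crude completion estimates. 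Your instinct to use Lemma \ref{crudebounds}(\ref{mustpark}) to control cascades near degree-$\geq 3$ vertices is on target --- that is indeed where the $\mathrm{minleafdist}$ hypothesis enters --- but it is deployed in the service of this completion-counting comparison, not of an injection into the ``two cars prefer a leaf'' set.
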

\begin{proof}
In analogy to the directed star situation studied earlier, there are three cases for a parking function, depending on $T$ or $\tilde{T}$ being the parking lot in question:
\begin{enumerate}
\item Exactly one car prefers $z$. This applies to both $T$ and $\tilde{T}$.
\item No cars prefer $z$, no cars park at $z$. This applies to both $T$ and $\tilde{T}$.
\item 
\begin{enumerate}
\item No cars prefers $z$, one car parks at $z$. This applies to $T$.
\item At least two cars prefer $z$. This applies to $\tilde{T}$.
\end{enumerate}
\end{enumerate}
We first show that $P(\tilde{T},m) = P(T,m)$ in Cases 1 and 2, after which it will suffice to compare $P(\tilde{T},m)$ and $P(T,m)$ in Case 3. The Cases 1 and 2 correspond to the cancellations performed in the situation where each vertex is preferred by at most one car in the directed star case.

\noindent \underline{Case 1}: \\
Let $s' \in \breve{P}(T,m)$ be such a parking function with car $j_z$ preferring $z$. By definition, each flip-path $J$ has enough vertices to accommodate all the cars preferring $J$. It follows that $\mathrm{flip}^*(s') \in \breve{P}(\tilde{T},m)$ and that $\mathrm{flip}^*(s')$ is a parking function with just car $j_z$ preferring $z$.

Let $s \in \breve{P}(\tilde{T},m)$ be such a parking function with car $i_z$ preferring $z$. Since $m < \mathrm{minleafdist}(T)$, each flip-path $I$ has enough vertices to accommodate all the cars preferring $I$; these cars can all park without ever leaving $I$. It follows that $\mathrm{flip}_I(s)$ also satisfies this property, for each flip-path $I$. Hence $\mathrm{flip}^*(s) \in \breve{P}(T,m)$ and $\mathrm{flip}^*(s)$ is a parking function with just car $i_z$ preferring $z$.

$\mathrm{flip}^*$ is clearly an involution, so the two sets of parking functions have the same cardinality as claimed.

\noindent \underline{Case 2}: \\
The same argument as in Case 1 applies, except now with $s$, $s'$, $\mathrm{flip}^*(s)$, $\mathrm{flip}^*(s')$ all being parking functions with no cars preferring $z$ or parking at $z$.

\noindent \underline{Case 3}: \\
Let $\breve{P}_{\geq2}(\tilde{T},m)$ denote the set of elements of $\breve{P}(\tilde{T},m)$ with at least two cars preferring $z$, corresponding to Case 3b. Let $\breve{P}_{0\&1}(T,m)$ denote the set of elements of $\breve{P}(T,m)$ with no cars preferring $z$ and one car parking at $z$, corresponding to Case 3a. Similar to our work on the directed star case, we will partition $\breve{P}_{\geq2}(\tilde{T},m)$ into subsets and match them with corresponding subsets of $\breve{P}_{0\&1}(T,m)$ in such a way that makes for easier comparison; as before, this method involves controlling the ``common'' features of $\breve{P}_{\geq2}(\tilde{T},m)$ and $\breve{P}_{0\&1}(T,m)$. 

Let $i < j \in [m]$, and let $U_{i,j} := [j]-\{i,j\}$, with the following purpose. In Case 3b, the pair $i < j$ will be the first two cars preferring $z$, so all cars in $U_{i,j}$ prefer the graph $\tilde{T}-\{z\}$. In Case 3a, the pair $i < j$ will be the first two cars preferring a single vertex such that $j$ parks at $z$, so all cars in $[j]$ prefer the graph $T-\{z\}$. In both cases, the cars in $U_{i,j}$ prefer non-root vertices of the tree. 

Define $\breve{P}_{\geq2}(\tilde{T},m)^{i,j}$ to be the set of elements of $\breve{P}_{\geq2}(\tilde{T},m)$ with $i < j$ being the first two cars preferring $z$. Define $\breve{P}_{0\&1}(T,m)^{i,j}$ to be the set of elements of $\breve{P}_{0\&1}(T,m)$ with $i < j$ being the first two cars preferring a single vertex such that $j$ parks at $z$. Notice that $$\breve{P}_{\geq2}(\tilde{T},m) = \bigsqcup_{i<j}{\breve{P}_{\geq2}(\tilde{T},m)^{i,j}}$$ and $$\breve{P}_{0\&1}(T,m) \supset \bigsqcup_{i<j}{\breve{P}_{0\&1}(T,m)^{i,j}};$$ the latter set inequality is strict as $\breve{P}_{0\&1}(T,m)$ includes the situation of more than two cars preferring a single vertex such that the last car parks at $z$.

In both $\breve{P}_{\geq2}(\tilde{T},m)^{i,j}$ and $\breve{P}_{0\&1}(T,m)^{i,j}$, the preferences of the cars in $U_{i,j}$ must constitute a parking function on the respective trees. For each such restricted parking function $f_{\tilde{T}}: U_{i,j} \rightarrow [n]-\{z\}$ on $\tilde{T}$, let $\breve{P}_{\geq2}(\tilde{T},m)^{i,j}[f_{\tilde{T}}]$ denote the set of elements of $\breve{P}_{\geq2}(\tilde{T},m)^{i,j}$ whose $U_{i,j}$ preferences are determined by $f_{\tilde{T}}$. For each such restricted parking function $f_{T}: U_{i,j} \rightarrow [n]-\{z\}$ on $T$, let $\breve{P}_{0\&1}(T,m)^{i,j}[f_{T}]$ denote the set of elements of $\breve{P}_{0\&1}(T,m)^{i,j}$ whose $U_{i,j}$ preferences are determined by $f_{T}$.

Applying $\mathrm{flip}^*$ to any restricted parking function $f_{\tilde{T}}: U_{i,j} \rightarrow [n]-\{z\}$ on $\tilde{T}$ yields a restricted parking function $\mathrm{flip}^*(f_{\tilde{T}}): U_{i,j} \rightarrow [n]-\{z\}$ on $T$; the converse is also true. By the arguments for Case 2, $\mathrm{flip}^*$ yields a one-to-one correspondence between the sets \[\{\breve{P}_{\geq2}(\tilde{T},m)^{i,j}[f_{\tilde{T}}] \mid f_{\tilde{T}}: U_{i,j} \rightarrow [n]-\{z\} \; \mbox{parking function}\}\] and \[\{\breve{P}_{0\&1}(T,m)^{i,j}[f_{T}] \mid f_T: U_{i,j} \rightarrow [n]-\{z\} \; \mbox{parking function}\}.\]

Fix a restricted parking function $f_{\tilde{T}}: U_{i,j} \rightarrow [n]-\{z\}$ on $\tilde{T}$. We compare the corresponding sets $\breve{P}_{\geq2}(\tilde{T},m)^{i,j}[f_{\tilde{T}}]$ and $\breve{P}_{0\&1}(T,m)^{i,j}[\mathrm{flip}^*(f_{\tilde{T}})]$. This involves comparing the number of ways to complete $f_{\tilde{T}}$ to an element of $\breve{P}_{\geq2}(\tilde{T},m)^{i,j}$ and the number of ways to complete $\mathrm{flip}^*(f_{\tilde{T}})$ to an element of $\breve{P}_{0\&1}(T,m)^{i,j}$. Since $m \leq \min(|N(z)|,  \mathrm{minleafdist}(T)  )$, we immediately have $m^2 \leq n$.

Denote by $\overline{f_{\tilde{T}}}$ and $\overline{\mathrm{flip}^*(f_{\tilde{T}})}$ the parking functions resulting from including the preferences of the car pair $(i,j)$ in $f_{\tilde{T}}$ and $\mathrm{flip}^*(f_{\tilde{T}})$, respectively. Note that the cars $(i,j)$ must prefer $z$ in $\overline{f_{\tilde{T}}}$, whereas they can prefer at least $|N(z)|$ vertices (which allow car $j$ to park at $z$) in $\overline{\mathrm{flip}^*(f_{\tilde{T}})}$, since $m \leq \min(|N(z)|,  \mathrm{minleafdist}(T)  )$. The idea here is that $\overline{f_{\tilde{T}}}$ and $\overline{\mathrm{flip}^*(f_{\tilde{T}})}$ are ``almost the same'' except for the preference of the cars $(i,j)$, so we can focus on the differences between completing $\overline{f_{\tilde{T}}}$ and $\overline{\mathrm{flip}^*(f_{\tilde{T}})}$ to an element of $\breve{P}_{\geq2}(\tilde{T},m)^{i,j}$ and an element of $\breve{P}_{0\&1}(T,m)^{i,j}$, respectively.

Let $m' \geq |N(z)|$ be the number of possible vertices for $(i,j)$ to prefer so that $j$ parks at $z$, in the formation of $\overline{\mathrm{flip}^*(f_{\tilde{T}})}$. This yields the partition of $\breve{P}_{0\&1}(T,m)^{i,j}[\mathrm{flip}^*(f_{\tilde{T}})]$ into the sets $\breve{P}_{0\&1}(T,m)^{i,j}_0[\mathrm{flip}^*(f_{\tilde{T}})]$, $\breve{P}_{0\&1}(T,m)^{i,j}_1[\mathrm{flip}^*(f_{\tilde{T}})]$, $\ldots$, $\breve{P}_{0\&1}(T,m)^{i,j}_{m'-1}[\mathrm{flip}^*(f_{\tilde{T}})]$ where $(i,j)$ prefers vertex $v_k$ in $\breve{P}_{0\&1}(T,m)^{i,j}_{k}[\mathrm{flip}^*(f_{\tilde{T}})]$. Notice that if $(i,j)$ prefers $v_k$, then no car in $[m]-[j]$ can prefer any vertex on $\mathrm{path}(v_k,z)$, since $\mathrm{path}(v_k,z)$ will have been completely occupied by vertices in $[j]$. See Example \ref{illusthm} for illustration. $\overline{\mathrm{flip}^*(f_{\tilde{T}})}$ is then completed to an element of $\breve{P}_{0\&1}(T,m)^{i,j}$ by determining the preferences of the remaining cars $[m]-[j]$.

On the other hand, $\overline{f_{\tilde{T}}}$ is completed to an element of $\breve{P}_{\geq2}(\tilde{T},m)^{i,j}$ by choosing $0 \leq l \leq m-j$ cars to prefer non-root vertices, while the remaining $m-j-l$ cars will prefer $z$; the preferences for $[j]$ have already been determined with $(i,j)$ preferring $z$. This yields the partition of $\breve{P}_{\geq2}(\tilde{T},m)^{i,j}[f_{\tilde{T}}]$ into the sets $\breve{P}_{\geq2}(\tilde{T},m)^{i,j}_0[f_{\tilde{T}}]$, $\breve{P}_{\geq2}(\tilde{T},m)^{i,j}_1[f_{\tilde{T}}]$, $\ldots$, $\breve{P}_{\geq2}(\tilde{T},m)^{i,j}_{m-j}[f_{\tilde{T}}]$ depending on $l$.

We now show that $P_{\geq2}(\tilde{T},m)^{i,j}[f_{\tilde{T}}] < P_{0\&1}(T,m)^{i,j}[\mathrm{flip}^*(f_{\tilde{T}})]$, by comparing $\breve{P}_{\geq2}(\tilde{T},m)^{i,j}_l[f_{\tilde{T}}]$ and $\breve{P}_{0\&1}(T,m)^{i,j}_l[\mathrm{flip}^*(f_{\tilde{T}})]$ for $l = 0, 1, \ldots, m-j$. Without loss of generality assume that the $\breve{P}_{0\&1}(T,m)^{i,j}_l[\mathrm{flip}^*(f_{\tilde{T}})]$ are ordered by decreasing cardinality.

Let $\breve{P}_{0\&1}(T,j+l)^{i,j}_l[\mathrm{flip}^*(f_{\tilde{T}})]$ denote the subset of $\breve{P}(T,j+l)$ whose cars $[j]$ have already made their preferences according to $\overline{\mathrm{flip}^*(f_{\tilde{T}})}$ and the cars $(i,j)$ prefer $v_l$, for $0 \leq l \leq m-j$. By Lemma \ref{crudebounds}(\ref{pfestimates}), we have the bound \[P_{0\&1}(T,j+l)^{i,j}_l[\mathrm{flip}^*(f_{\tilde{T}})] \leq \frac{P_{0\&1}(T,m)^{i,j}_{l}[\mathrm{flip}^*(f_{\tilde{T}})]}{(n-j-l)^{\overline{m-j-l}}}.\]

$\overline{f_{\tilde{T}}}$ is completed to an element of $\breve{P}_{\geq2}(\tilde{T},m)^{i,j}_{l}[f_{\tilde{T}}]$ by choosing $l$ cars from $[m]-[j]$ and then determining their preferences on $\tilde{T}-\{z\}$; the other $m-j-l$ cars will prefer $z$. Since $m \leq   \mathrm{minleafdist}(T)  $, there is only one outgoing path from $\mathrm{flip}^*(v_l)$ in $\tilde{T}$, which leads to a leaf. For any $s \in \breve{P}_{\geq2}(\tilde{T},m)^{i,j}_{l}[f_{\tilde{T}}]$, the outgoing path from $\mathrm{flip}^*(v_l)$ will be completely occupied by cars in $[j]$ with the exception of $\mathrm{flip}^*(v_l)$ itself. After choosing the $l$ cars, let $\breve{P}(\tilde{T}-\{z\},l)[f_{\tilde{T}}]$ denote the subset of $\breve{P}_{\geq2}(\tilde{T},m)^{i,j}_{l}[f_{\tilde{T}}]$ where these specific $l$ cars are the ones chosen; we use this notation for simplicity since the number of ways to determine the preferences of the $l$ cars is independent of the specific cars chosen, and is always equal to $|\breve{P}(\tilde{T}-\{z\},l)[f_{\tilde{T}}]| = P(\tilde{T}-\{z\},l)[f_{\tilde{T}}]$. We can partition $\breve{P}(\tilde{T}-\{z\},l)[f_{\tilde{T}}]$ as $$\breve{P}(\tilde{T}-\{z\},l)[f_{\tilde{T}}] = \breve{P}(\tilde{T}-\{z\},l)[f_{\tilde{T}},\widehat{\mathrm{flip}^*(v_l)}] \sqcup \breve{P}(\tilde{T}-\{z\},l)[f_{\tilde{T}},\overline{\mathrm{flip}^*(v_l)}],$$ where $\mathrm{flip}^*(v_l)$ is never occupied in the former set while $\mathrm{flip}^*(v_l)$ may be occupied in the latter set. Notice that $\mathrm{flip}^*$ is an injection from $\breve{P}(\tilde{T}-\{z\},l)[f_{\tilde{T}},\widehat{\mathrm{flip}^*(v_l)}]$ to $\breve{P}_{0\&1}(T,j+l)^{i,j}_l[\mathrm{flip}^*(f_{\tilde{T}})]$ where the preference of $(i,j)$ is changed from $z$ to $v_l$, hence $P(\tilde{T}-\{z\},l)[f_{\tilde{T}},\widehat{\mathrm{flip}^*(v_l)}] \leq P_{0\&1}(T,j+l)^{i,j}_l[\mathrm{flip}^*(f_{\tilde{T}})]$. Since $m \leq \mathrm{minleafdist}(T)$, there are at least $m-j$ vertices connecting $\mathrm{flip}^*(v_l)$ to any vertex of degree at least three. By Lemma \ref{crudebounds}(\ref{mustpark}) we have $P(\tilde{T}-\{z\},l)[f_{\tilde{T}},\overline{\mathrm{flip}^*(v_l)}] \leq \frac{lm}{n-j-l+2}P(\tilde{T}-\{z\},l)[f_{\tilde{T}},\widehat{\mathrm{flip}^*(v_l)}] \leq \frac{lm}{n-j-l+2}P_{0\&1}(T,j+l)^{i,j}_l[\mathrm{flip}^*(f_{\tilde{T}})] < P_{0\&1}(T,j+l)^{i,j}_l[\mathrm{flip}^*(f_{\tilde{T}})]$, since $lm \leq (m-j)m = m^2-mj < m^2-(j+l)+1 \leq n-j-l+1$. It follows that \begin{align*}
P_{\geq2}(\tilde{T},m)^{i,j}_{l}[f_{\tilde{T}}] &= {m-j \choose l}P(\tilde{T}-\{z\},l)[f_{\tilde{T}}] \\ &= {m-j \choose l}[P(\tilde{T}-\{z\},l)[f_{\tilde{T}},\widehat{\mathrm{flip}^*(v_l)}]+P(\tilde{T}-\{z\},l)[f_{\tilde{T}},\overline{\mathrm{flip}^*(v_l)}]] \\ &< 2{m-j \choose l}P_{0\&1}(T,j+l)^{i,j}_l[\mathrm{flip}^*(f_{\tilde{T}})] \\ &= 2{m-j \choose m-j-l}P_{0\&1}(T,j+l)^{i,j}_l[\mathrm{flip}^*(f_{\tilde{T}})] \\ &= \frac{2(m-j)(m-j-1)\cdots(l+1)}{(m-j-l)(m-j-l-1)\cdots1}P_{0\&1}(T,j+l)^{i,j}_l[\mathrm{flip}^*(f_{\tilde{T}})]. \end{align*}

For $l < m-j$, the inequality established above yields \begin{align*}
P_{\geq2}(\tilde{T},m)^{i,j}_{l}[f_{\tilde{T}}] &< \frac{2(m-j)(m-j-1)\cdots(l+1)}{(m-j-l)(m-j-l-1)\cdots1}P_{0\&1}(T,j+l)^{i,j}_l[\mathrm{flip}^*(f_{\tilde{T}})] \\ &< (n-j-l)(n-j-l-1)\cdots(n-m+1)P_{0\&1}(T,j+l)^{i,j}_l[\mathrm{flip}^*(f_{\tilde{T}})] \\ &\leq P_{0\&1}(T,m)^{i,j}_{l}[\mathrm{flip}^*(f_{\tilde{T}})], \end{align*} where the second inequality follows by comparing the $m-j-l$ corresponding factors on both sides, because $n-m+1 \geq m^2-m+1 > 2m-4 \geq 2(m-j)$.

It remains to consider the case $l = m-j$. By the inequalities established above, we have $P_{\geq2}(\tilde{T},m)^{i,j}_{m-j}[f_{\tilde{T}}] < 2{m-j \choose m-j}P_{0\&1}(T,m)^{i,j}_{m-j}[\mathrm{flip}^*(f_{\tilde{T}})] = 2P_{0\&1}(T,m)^{i,j}_{m-j}[\mathrm{flip}^*(f_{\tilde{T}})]$. To establish an inequality between $P_{\geq2}(\tilde{T},m)^{i,j}[f_{\tilde{T}}]$ and $P_{0\&1}(T,m)^{i,j}[\mathrm{flip}^*(f_{\tilde{T}})]$, it suffices to compare $P_{\geq2}(\tilde{T},m)^{i,j}_{m-j}[f_{\tilde{T}}]+P_{\geq2}(\tilde{T},m)^{i,j}_{0}[f_{\tilde{T}}]$ with $P_{0\&1}(T,m)^{i,j}_{m-j}[\mathrm{flip}^*(f_{\tilde{T}})]+P_{0\&1}(T,m)^{i,j}_0[\mathrm{flip}^*(f_{\tilde{T}})]$. Since $P_{0\&1}(T,m)^{i,j}_{m-j}[\mathrm{flip}^*(f_{\tilde{T}})] \leq P_{0\&1}(T,m)^{i,j}_{0}[\mathrm{flip}^*(f_{\tilde{T}})]$ by assumption of decreasing order, we have \begin{align*}
P_{\geq2}(\tilde{T},m)^{i,j}_{m-j}[f_{\tilde{T}}]+P_{\geq2}(\tilde{T},m)^{i,j}_{0}[f_{\tilde{T}}] &= P_{\geq2}(\tilde{T},m)^{i,j}_{m-j}[f_{\tilde{T}}]+1 \\ &\leq 2P_{0\&1}(T,m)^{i,j}_{m-j}[\mathrm{flip}^*(f_{\tilde{T}})]-1+1 \\ &= 2P_{0\&1}(T,m)^{i,j}_{m-j}[\mathrm{flip}^*(f_{\tilde{T}})] \\ &\leq P_{0\&1}(T,m)^{i,j}_{m-j}[\mathrm{flip}^*(f_{\tilde{T}})]+P_{0\&1}(T,m)^{i,j}_{0}[\mathrm{flip}^*(f_{\tilde{T}})].\end{align*}

Since $P_{\geq2}(\tilde{T},m)^{i,j}_{m-j}[f_{\tilde{T}}]+P_{\geq2}(\tilde{T},m)^{i,j}_{0}[f_{\tilde{T}}] \leq P_{0\&1}(T,m)^{i,j}_{m-j}[\mathrm{flip}^*(f_{\tilde{T}})]+P_{0\&1}(T,m)^{i,j}_{0}[\mathrm{flip}^*(f_{\tilde{T}})]$ and since $P_{\geq2}(\tilde{T},m)^{i,j}_{l}[f_{\tilde{T}}] < P_{0\&1}(T,m)^{i,j}_{l}[\mathrm{flip}^*(f_{\tilde{T}})]$ for $l < m-j$, it follows that $P_{\geq2}(\tilde{T},m)^{i,j}[f_{\tilde{T}}] < P_{0\&1}(T,m)^{i,j}[\mathrm{flip}^*(f_{\tilde{T}})]$ whenever $m-j \geq 2$ (also trivially true when $m-j = 0$). Summing over all $i < j$ and all $f_{\tilde{T}}$, we obtain \[\sum_{i<j}{\sum_{f_{\tilde{T}}}{P_{\geq2}(\tilde{T},m)^{i,j}[f_{\tilde{T}}]}} < \sum_{i<j}{\sum_{f_{\tilde{T}}}{P_{0\&1}(T,m)^{i,j}[\mathrm{flip}^*(f_{\tilde{T}})]}}\] and hence $P_{\geq2}(\tilde{T},m) < P_{0\&1}(T,m)$.

In conclusion, summing over the three cases yields $P(\tilde{T},m) < P(T,m)$, with the difference coming from Case 3.

\end{proof}

\begin{rem}
In the case that $T$ is a starlike tree (a collection of paths joined at a common root), the same conclusion holds with the looser restriction $m \leq \min(N(z),\sqrt{n})$; the same argument applies with slight modifications.
\end{rem}

\begin{exm}
\label{illusthm}
Suppose $\tilde{T}$ is the following digraph.

\begin{tikzpicture}
\node[shape=circle,draw=black] (1) at (0,0) {1};
\node[shape=circle,draw=black] (2) at (0,1) {2};
\node[shape=circle,draw=black] (3) at (0,-1) {3};
\node[shape=circle,draw=black] (4) at (1,0) {4};
\node[shape=circle,draw=black] (5) at (-1,0) {5};
\node[shape=circle,draw=black] (6) at (-1,2) {6};
\node[shape=circle,draw=black] (7) at (-2,2) {7};
\node[shape=circle,draw=black] (8) at (-3,2) {8};
\node[shape=circle,draw=black] (9) at (1,2) {9};
\node[shape=circle,draw=black] (10) at (2,2) {10};
\node[shape=circle,draw=black] (11) at (3,2) {11};
\node[shape=circle,draw=black] (12) at (-2,0) {12};
\node[shape=circle,draw=black] (13) at (-3,0) {13};
\node[shape=circle,draw=black] (14) at (-4,0) {14};
\node[shape=circle,draw=black] (15) at (2,0) {15};
\node[shape=circle,draw=black] (16) at (3,0) {16};
\node[shape=circle,draw=black] (17) at (4,0) {17};
\node[shape=circle,draw=black] (18) at (-1,-1) {18};
\node[shape=circle,draw=black] (19) at (-2,-1) {19};
\node[shape=circle,draw=black] (20) at (-3,-1) {20};

\path [->] (1) edge (2);
\path [->] (1) edge (3);
\path [->] (1) edge (4);
\path [->] (1) edge (5);
\path [->] (2) edge (6);
\path [->] (6) edge (7);
\path [->] (7) edge (8);
\path [->] (2) edge (9);
\path [->] (9) edge (10);
\path [->] (10) edge (11);
\path [->] (5) edge (12);
\path [->] (12) edge (13);
\path [->] (13) edge (14);
\path [->] (4) edge (15);
\path [->] (15) edge (16);
\path [->] (16) edge (17);
\path [->] (3) edge (18);
\path [->] (18) edge (19);
\path [->] (19) edge (20);
\end{tikzpicture}

We have $\mathrm{minleafdist}(T) = 4$, so $P(\tilde{T},m) < P(T,m)$ for $m = 2$, $3$, $4$ by Theorem \ref{sparsepark}. For $m = 4$, $(i,j) = (1,3)$, and $f_{\tilde{T}} = (8)$, we have $\mathrm{flip}^*(f_{\tilde{T}}) = (2)$, and the vertices for $(i,j)$ to prefer in $T$ are $3, 4, 5, 6, 9$.
\end{exm}

\section{Future Work}
For a source star $\tilde{S}$, computations show that we may have $P(\tilde{S},m) > P(S,m)$ for values of $m$ close to $n$. It would be interesting to find out how large $m$ needs to be for $P(\tilde{S},m) > P(S,m)$.

For a source tree $\tilde{T}$ with $m > \mathrm{minleafdist}(\tilde{T})$, the argument in Theorem \ref{sparsepark} breaks down since the $\mathrm{flip}^*$ operation may not yield a parking function on $T$. It would be interesting to see how/if this difficulty could be overcome, and what inequality could be established for larger values of $m$. Conceivably, the restriction $m \leq |N(z)|$ in Theorem \ref{sparsepark} may be relaxed if finer estimates than those in Lemma \ref{crudebounds} are found or if the full size of $\breve{P}_{0\&1}(T,m)$ is considered by taking into account the case where more than two cars prefer a single vertex with the last car parking at $z$.


\begin{thebibliography}{10}
\bibitem{ArmLeoWar}
D. Armstrong, N. A. Loehr, and G. S. Warrington: \emph{Rational parking functions and catalan numbers}. Annals of Combinatorics, \textbf{20}(1), 21--58 (2016)
\bibitem{Baum}
Alyson Baumgardner: \emph{The naples parking function}. Honors Contract-Graph Theory, Florida Gulf Coast University (2019)
\bibitem{CaJoSch}
T. P. Peter J Cameron, Daniel Johannsen and P. Schweitzer: \emph{Counting defective parking functions}. Electronic Journal of Combinatorics, \textbf{15} (2008)
\bibitem{GesSeo}
I. M. Gessel and S. Seo: \emph{A refinement of Cayley's formula for trees}. Electronic Journal of Combinatorics, Vol. 11, No. 2 (2006)
\bibitem{KingYan}
W. King and C. H. Yan: \emph{Parking functions on directed graphs and some directed trees}. Preprint, {\tt arXiv:1905.12010 [math.CO]}
\bibitem{KonWei}
A. G. Konheim and B. Weiss: \emph{An occupancy discipline and applications}. SIAM Journal on Applied Mathematics, \textbf{14}(6), 1266--1274 (1966)
\bibitem{LackPan}
M.-L Lackner and A. Panholzer. \emph{Parking functions for mappings}. Journal of Combinatorial Theory, Series A, \textbf{142}, 1--28 (2016)
\bibitem{Stan}
R. Stanley: \emph{Enumerative Combinatorics}, Vol. 2. Cambridge University Press (1999)
\bibitem{Yan15}
C. H. Yan: \emph{Parking functions}, In M. B\`ona, editor. Handbook of Enumerative Combinatorics, chapter 13, pages 835--894. CRC Press (2015)
\end{thebibliography}
\end{document}